\definecolor{verylight}{gray}{0.97}
\definecolor{light}{gray}{0.9}
\definecolor{medium}{gray}{0.85}
\definecolor{dark}{gray}{0.6}
\def\frk{\frak}               
\def\Phi{{\frk n}}
\def\Phi{{\frk N}}
\def\opn#1#2{\def#1{\operatorname{#2}}} 
\opn\chara{char} \opn\length{\ell} \opn\pd{pd} \opn\rk{rk}
\opn\projdim{proj\,dim} \opn\injdim{inj\,dim} \opn\rank{rank}
\opn\depth{depth} \opn\grade{grade} \opn\height{height}
\opn\embdim{emb\,dim} \opn\codim{codim}
\opn\Tr{Tr} \opn\bigrank{big\,rank}
\opn\superheight{superheight}\opn\lcm{lcm}
\opn\trdeg{tr\,deg}
\opn\reg{reg} \opn\lreg{lreg} \opn\ini{in} \opn\lpd{lpd}
\opn\size{size}\opn\bigsize{bigsize}
\opn\cosize{cosize}\opn\bigcosize{bigcosize}
\opn\sdepth{sdepth}\opn\sreg{sreg}
\opn\link{link}\opn\fdepth{fdepth}
\opn\deg{deg}
\opn\max{max}
\opn\indeg{indeg}
\opn\min{min}
\opn\psln{psln}
\opn\div{div} \opn\Div{Div} \opn\cl{cl} \opn\Cl{Cl}
\let\epsilon\varepsilon
\let\phi=\varphi
\let\kappa=\varkappa
\opn\Spec{Spec} \opn\Supp{Supp} \opn\supp{supp} \opn\Sing{Sing}
\opn\Ass{Ass} \opn\Min{Min}\opn\Mon{Mon} \opn\dstab{dstab} \opn\astab{astab}
\opn\Syz{Syz}
\opn\Ann{Ann} \opn\Rad{Rad} \opn\Soc{Soc}
\opn\Im{Im} \opn\Ker{Ker} \opn\Coker{Coker} \opn\Am{Am}
\opn\Hom{Hom} \opn\Tor{Tor} \opn\Ext{Ext} \opn\End{End}
\opn\Aut{Aut} \opn\id{id}
\opn\nat{nat}
\opn\pff{pf}
\opn\Pf{Pf} \opn\GL{GL} \opn\SL{SL} \opn\mod{mod} \opn\ord{ord}
\opn\Gin{Gin} \opn\Hilb{Hilb}\opn\sort{sort}
\opn\initial{init}
\opn\ende{end}
\opn\height{height}
\opn\depth{depth}
\opn\type{type}
\opn\ldim{ldim}
\opn\lk{lk}
\opn\del{del}
\opn\aff{aff} \opn\con{conv} \opn\relint{relint} \opn\st{st}
\opn\lk{lk} \opn\cn{cn} \opn\core{core} \opn\vol{vol}
\opn\link{link} \opn\star{star}\opn\lex{lex}
\opn\gr{gr}
\def\pot#1#2{#1[\kern-0.28ex[#2]\kern-0.28ex]}
\opn\dirlim{\underrightarrow{\lim}}
\opn\inivlim{\underleftarrow{\lim}}
\def\Implies{\ifmmode\Longrightarrow \else
        \unskip${}\Longrightarrow{}$\ignorespaces\fi}
\def\implies{\ifmmode\Rightarrow \else
        \unskip${}\Rightarrow{}$\ignorespaces\fi}
\def\iff{\ifmmode\Longleftrightarrow \else
        \unskip${}\Longleftrightarrow{}$\ignorespaces\fi}
 \theoremstyle{plain}
\newtheorem{Theorem}{Theorem}[section]
 \newtheorem{Lemma}[Theorem]{Lemma}
 \newtheorem{Corollary}[Theorem]{Corollary}
 \theoremstyle{definition}
 \newtheorem{Definition}[Theorem]{Definition}
 \newtheorem{Remark}[Theorem]{Remark}
 \newtheorem{Example}[Theorem]{Example}
\DeclareMathOperator{\g}{\mathcal{G}}
\let\epsilon\varepsilon
\let\kappa=\varkappa
\def\qed{\ifhmode\textqed\fi
      \ifmmode\ifinner\quad\qedsymbol\else\dispqed\fi\fi}
\def\textqed{\unskip\nobreak\penalty50
       \hskip2em\hbox{}\nobreak\hfil\qedsymbol
       \parfillskip=0pt \finalhyphendemerits=0}
\def\dispqed{\rlap{\qquad\qedsymbol}}
\opn\dis{dis}
\def\pnt{{\raise0.5mm\hbox{\large\bf.}}}
\opn\Lex{Lex}
\begin{document}

\author[Mafi, Rasul Qadir and Saremi]{ Amir Mafi, Rando Rasul Qadir and Hero Saremi}
\title{Sequentially Cohen-Macaulay and pretty clean monomial ideals}

\address{Amir Mafi, Department of Mathematics, University Of Kurdistan, P.O. Box: 416, Sanandaj, Iran.}
\email{a\_mafi@ipm.ir}
\address{Rando Rasul Qadir, Department of Mathematics, University of Kurdistan, P.O. Box: 416, Sanandaj,
Iran.}
\email{rando.qadir@univsul.edu.iq}
\address{Hero Saremi, Department of Mathematics, Sanandaj Branch, Islamic Azad University, Sanandaj, Iran.}
\email{h-saremi@iausdj.ac.ir or hero.saremi@gmail.com}

\begin{abstract}
Let $R=K[x_1,\ldots, x_n]$ be the polynomial ring in $n$ variables over a field $K$ and $I$ be monomial ideal of $R$. 
In this paper, we show that if $I$ is a generic monomial ideal, then $R/I$ is pretty clean if and only if $R/I$ is sequentially Cohen-Macaulay. 
Furthermore, we prove that this equivalence remains unchanged for some special monomial ideals. Moreover, we provide an example that disproves the conjecture raised in \cite[p. 123]{S1} regarding generic monomial ideals. 
\end{abstract}

\subjclass[2020]{13C14, 13H10, 13D02}
\keywords{Sequentially Cohen-Macaulay, pretty clean, generic monomial ideal.}

\maketitle
\section*{Introduction}
Throughout this paper, we denote $R=K[x_1,\ldots,x_n]$ a polynomial ring in $n$ variables over a field $K$ and $I$ is a monomial ideal of $R$. We assume, as usual, by $\g(I)$ the unique minimal set of monomial generators of $I$. For each monomial $u=x_1^{a_1} \ldots x_n^{a_n}$, we define the support of $u$ to be $\supp(u) = \{x_i | a_i > 0 \}$.
A monomial ideal
$I=(u_1,\ldots, u_r)$ is called {\it generic} if for each two distinct minimal generators $u_i$ and $u_j$ with the same positive degree in some variable $x_s$, there exists a third generator $u_t$ which is strictly divides $\lcm(u_i,u_j)$, i.e., $\supp(\lcm(u_i,u_j))=\supp(\frac{\lcm(u_i,u_j)}{u_t})$.
The concept of generic monomial ideals was introduced in \cite{MSY}, extending the earlier definition found in \cite{BPS}. For further details, refer also to \cite{Y}.

The cyclic module $R/I$ is defined {\it clean} if there exists a chain of monomial ideals \[\mathcal{F}: I=I_0\subset I_1\subset I_2\subset\ldots\subset I_{r-1}\subset I_r=R\] such that $I_{i}/I_{i-1}\cong R/{\frak{p}_i}$ with $\frak{p}_i$ is a minimal prime ideal of $I$. In other words, for $i=1,\ldots,r$, there exists a monomial element $u_{i}\in I_{i}$ such that $I_{i}=(I_{i-1},u_{i})$ and $\frak{p}_i=(I_{i-1}:u_{i})$. This filtration is called a {\it monomial prime filtration} of $R/I$. The set $\Supp(\mathcal{F})=\{\frak{p}_1,\ldots,\frak{p}_r\}$ is called the support of the prime filtration $\mathcal{F}$. It is known that 
$\Ass(I)\subseteq\Supp(\mathcal{F})\subseteq V(I)$. By applying \cite{D}, for a clean filtration $\mathcal{F}$ of $R/I$, one has $\min\Ass(I)=\Ass(I)=\Supp(\mathcal{F})$. Herzog and Popescu in \cite{HP} defined that the module $R/I$ is {\it pretty clean} when there is a prime filtration 
$\mathcal{F}$ with the following property: if $\frak{p}_i\subset\frak{p}_j$, then $j<i$. It is clear that the clean modules are pretty clean and if $I$ is a squarefree monomial ideal, then $R/I$ is pretty clean if and only if $R/I$ is clean, see \cite[Corollary 3.5]{HP}. However, this equivalence for any monomial ideal is not true, see \cite[Example 3.6]{HP}. Furthermore, if there is a prime filtration $\mathcal{F}$ with $\Ass(I)=\Supp(\mathcal{F})$, then $R/I$ is said to be {\it almost clean}, see \cite{HVZ}. From \cite[Corollary 3.4]{HP},  pretty clean modules are almost clean. In addition, Herzog and Popescu \cite[Corollary 4.3]{HP} showed that if $R/I$ is pretty clean, then $R/I$ is sequentially Cohen-Macaulay.   

Suppose $\Delta$ is a simplicial complex, $I=I_{\Delta}$ is the Stanley-Reisner ideal of $\Delta$ which is generated by all squarefree monomials $x_{i_1}x_{i_2}\ldots x_{i_r}$ such that $\{i_1,\ldots,i_r\}$ is non-face of $\Delta$ and $K[\Delta]=R/I$ is the Stanley-Reisner ring. 
Dress in \cite{D} showed that $\Delta$ is shellable in non-pure case if and only if $K[\Delta]$ is clean.   

In this paper, we show that if $I$ is a generic monomial ideal, then $R/I$ is pretty clean if and only if $R/I$ is sequentially Cohen-Macaulay. In addition, we give this characterization for some special monomial ideals. Furthermore, we  provide an example that disproves the conjecture raised in \cite[p. 123]{S1} regarding generic monomial ideals.

For any unexplained notion or terminology, we refer the reader to \cite{HH} and \cite{Vi}. Several explicit examples were  performed with help of the computer algebra system Macaulay 2 \cite{G}.

\section{Preliminaries}
In this section, we revisit some essential definitions and properties that will be utilized throughout the article.
Let $\Delta$ be a simplicial complex defined over the vertex set $V=\{x_1,\ldots, x_n \}$. The elements of $\Delta$ are referred to as faces, while a facet of $\Delta$ is a maximal face with respect to inclusion. The dimension of a face $F$ is given by $\vert F \vert -1$ and the dimension of a complex $\Delta$ is determined as the maximum dimensions of its facets. If all facets of $\Delta$ have the same dimension, then $\Delta$ is called {\it pure}.
The simplicial complex $\Delta$ is called {\it connected} if there exists a sequence of facets $F_0,F_1,\ldots, F_r$ such that $F_{i-1}\cap F_{i}\neq\emptyset$ for $i=1,\ldots, r$.

For a given simplicial complex $\Delta$ on $V$, we define $\Delta^{\vee}$ by $\Delta^{\vee} = \{V \setminus A~ | ~A \notin \Delta \}$.
The simplicial complex $\Delta^{\vee}$ is called the {\it Alexander dual} of $\Delta$.
For every subset $F \subseteq V$, we set $x_{ F} = \prod_{x_j \in F} x_{j}$. The {\it Stanley-Reisner} ideal of $\Delta$ over $K$ is the ideal $I_{\Delta}$ of $R$ which is generated by those squarefree monomials $x_F$ with $F \notin \Delta$. Let $I$ be an arbitrary squarefree monomial ideal. Then there is a unique simplicial complex $\Delta$ such that $I = I_{\Delta}$. For simplicity,
we often write $I^{\vee}$ to denote the ideal $I_{\Delta^{\vee}}$, and we call it the {\it Alexander dual} of $I$.
If $I$ is a squarefree monomial ideal $I = \cap_{i=1}^{t} \frak{p_{i}}$, where each of the $\frak{p_{i}}$ is a monomial prime ideal of $I$, then the ideal $I^{\vee}$ is minimally generated by monomials $u_i = \prod_{x_{j} \in G(\frak{p_{i}})} x_{j}$.

For a homogeneous ideal $I$, we write $(I_i)$ to denote the ideal generated by the degree $i$ elements of $I$ and if $I$ is generated by squarefree monomials, then we denote by $I_{[i]}$ the ideal generated by squarefree monomials of degree $i$ of $I$. Herzog and Hibi \cite{HH1}
introduced the following definition.

\begin{Definition}
A monomial ideal $I$ is componentwise linear if and only if $(I_i)$ has a linear resolution for all $i$. If $I$ is a squarefree monomials ideal, then $I$ is componentwise linear if and only if $I_{[i]}$ has a linear resolution for all $i$.	
\end{Definition}

Some familiar classes of ideals which are componentwise linear, for example  ideals with linear resolutions, stable ideals, squarefree strongly stable ideals and homogeneous linear quotients ideals (see \cite{HH}).

We recall the following remarkable result, we will incorporate it in the continuation. 
\begin{Theorem}\label{T0}
Let $I=I_{\Delta}$ be a squarefree monomial ideal of $R$. Then the following statements hold:
\begin{itemize}
\item[(i)] $R/I$ is Cohen-Macaulay if and only if the Alexander dual $I^{\vee}$ has a linear resolution, see \cite{ER}.
\item[(ii)] $K[\Delta]$ is clean if and only if the simplicial complex $\Delta$ is shellable if and only if $I^{\vee}$ has a linear quotients, see \cite{D, HHZ1}.
\item[(iii)] $R/I$ is sequentially Cohen-Macaulay if and only if the Alexander dual $I^{\vee}$ is componentwise linear, see \cite{HH1}. 

\end{itemize}
\end{Theorem}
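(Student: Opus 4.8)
The three parts are all manifestations of Alexander duality, so the plan is to isolate the single homological mechanism behind them and then specialize. That mechanism is Hochster's formula, which expresses the graded Betti numbers $\beta_{i,j}(I^{\vee})$ through the reduced simplicial homology of induced subcomplexes of $\Delta$, together with the resulting Terai identity $\reg(I^{\vee})=\pd(R/I)$; I would record this dictionary first. For (i), observe that the minimal generators of $I^{\vee}$ are the monomials $u_i=\prod_{x_j\in G(\pp_i)}x_j$ attached to the minimal primes $\pp_i$ of $I$, so their degrees are the heights of the $\pp_i$. If $R/I$ is Cohen--Macaulay it is unmixed, so all these heights equal $c=\height I$ and $I^{\vee}$ is generated in the single degree $c$; an equigenerated ideal has a linear resolution exactly when $\reg(I^{\vee})=c$. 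On the other side, Auslander--Buchsbaum gives $\pd(R/I)=n-\dep(R/I)$, so the condition $\dep(R/I)=\dim(R/I)=n-c$ is equivalent to $\pd(R/I)=c$; Terai's identity converts this into $\reg(I^{\vee})=c$, and equigeneration of $I^{\vee}$ supplies the unmixedness needed for the converse. This is the Eagon--Reiner theorem \cite{ER}.

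For (ii), I would invoke Dress's theorem \cite{D}, which identifies cleanness of $K[\Delta]$ with (nonpure) shellability of $\Delta$, and then translate a shelling into a linear-quotient order on $I^{\vee}$. The minimal generators of $I^{\vee}$ correspond to the complements of the facets of $\Delta$; a shelling order on the facets induces an ordering $u_1,\dots,u_s$ of these generators under which each colon ideal $((u_1,\dots,u_{k-1}):u_k)$ is generated by variables, which is precisely a linear-quotient order. The construction is reversible, so linear quotients for $I^{\vee}$ return a shelling of $\Delta$, giving the stated equivalence \cite{HHZ1}.

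For (iii), I would start from Schenzel's characterization: $R/I$ is sequentially Cohen--Macaulay if and only if, for every $i$, the module $\Ext^{n-i}_R(R/I,R)$ is either zero or Cohen--Macaulay of dimension $i$. The task is then to refine the single-strand duality used in (i) into a degree-by-degree statement, showing that this vanishing or Cohen--Macaulayness of each Ext module is equivalent to linearity of the resolution of the squarefree strand $(I^{\vee})_{[j]}$ for every $j$; the latter is, by definition, componentwise linearity of $I^{\vee}$. Concretely, one applies the Eagon--Reiner equivalence not to $I$ itself but to the ideals governing its pure-dimensional pieces, matching the dimension filtration of $R/I$ with the degree filtration $\{(I^{\vee})_{[j]}\}_j$; this is the content of \cite{HH1}.

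The main obstacle is (iii). Parts (i) and (ii) are ``single-layer'' statements---one degree, or one linear order---whereas sequential Cohen--Macaulayness must be controlled across all homological dimensions at once. The delicate point is the bookkeeping that aligns the dimension filtration of $R/I$ with the degree filtration of $I^{\vee}$, so that each graded strand of the resolution of $I^{\vee}$ becomes linear exactly when the corresponding Ext module of $R/I$ is Cohen--Macaulay of the predicted dimension.
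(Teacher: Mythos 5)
The paper never proves Theorem~\ref{T0}: it is quoted as background, with each part delegated to the literature (\cite{ER} for (i), \cite{D} and \cite{HHZ1} for (ii), \cite{HH1} for (iii)), so the only meaningful comparison is between your sketch and those cited proofs. For (i) and (ii) your reconstruction is accurate and is essentially the standard argument: Hochster's formula yields Terai's identity $\reg(I^{\vee})=\pd(R/I)$; since the minimal generators of $I^{\vee}$ record the heights of the minimal primes of $I$, equigeneration of $I^{\vee}$ is exactly unmixedness of $I$, and combining this with Auslander--Buchsbaum gives Eagon--Reiner in both directions. Likewise, cleanness equals (nonpure) shellability by Dress, and the facet-complement dictionary turns a shelling order into a linear-quotient order on $I^{\vee}$ and back, which is the Herzog--Hibi--Zheng theorem.

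Part (iii), however, is a plan rather than a proof, and the plan's central sentence---``showing that this vanishing or Cohen--Macaulayness of each Ext module is equivalent to linearity of the resolution of the squarefree strand $(I^{\vee})_{[j]}$''---\emph{is} the theorem of \cite{HH1}; announcing it does not discharge it. Note also that the route you describe (Schenzel's Ext criterion matched against the linear strands of $I^{\vee}$ via local duality) is not the argument of \cite{HH1} but the later approach of R\"omer and Yanagawa through squarefree modules. Herzog and Hibi argue combinatorially: by Duval's theorem, $\Delta$ is sequentially Cohen--Macaulay if and only if every pure skeleton $\Delta^{[i]}$ is Cohen--Macaulay; one then checks the identity $(I_{\Delta^{\vee}})_{[j]}=I_{(\Delta^{[n-j-1]})^{\vee}}$, so that applying part (i) to each pure skeleton translates Cohen--Macaulayness of $\Delta^{[n-j-1]}$ into linearity of the $j$-th squarefree component, which is componentwise linearity of $I^{\vee}$. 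Either route can be completed, but in yours the ``delicate bookkeeping'' you defer at the end is precisely the missing content, so as written part (iii) has a genuine gap.
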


\section{Main Results}
We start this section by the following definition is due to Stanley \cite[Sec. II, 3.9]{S}.

\begin{Definition}\label{D1}
Let $I$ be a monomial ideal of $R$. The module $R/I$ is sequentially Cohen-Macaulay if and only if there exists a chain of monomial ideals 
\[I_0=I\subset I_1\subset I_2\subset\ldots\subset I_r=R,\]
such that each factor module $I_{i}/I_{i-1}$ is Cohen-Macaulay with $\dim I_{i}/I_{i-1}<\dim I_{i+1}/I_{i}$ for $i=1,\ldots,r-1$. Furthermore, if this property is satisfied, then this chain of ideals is uniquely determined.
\end{Definition}

\begin{Example}\label{E1}
Let $I$ be a monomial ideal of $R$. Then $R/I$ is pretty clean, implying that it is sequentially Cohen-Macaulay when at least one of the following conditions holds:
\begin{enumerate}
	
\item[(i)] if $n\leq 3$, see \cite[Theorem 1.10]{S2}. 
\item[(ii)] if $I=(u_1,\ldots,u_s)$ is a squarefree monomial ideal such that either $s\leq 3$ or $\supp(u_i)\cup\supp(u_j)=\{x_1,\ldots,x_n\}$ for all $1\leq i\neq j\leq n$, see \cite[Theorem 2.6]{MNS}. 
\item[(iii)] if $\Ass(I)$ is totally ordered with respect to inclusion, see \cite[Proposition 5.1]{HP} or \cite{CDSS}. 
\end{enumerate}	
\end{Example}

\begin{Lemma}\label{L1}
Let $I$ be a generic monomial ideal. $\Supp(\mathcal{F})=\Ass(I)$.
\end{Lemma}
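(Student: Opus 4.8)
The inclusion $\Ass(I)\subseteq\Supp(\mathcal F)$ holds for every monomial prime filtration (as recalled in the Introduction), so the entire content of the statement is the reverse inclusion; equivalently, the plan is to produce one prime filtration $\mathcal F$ of $R/I$ all of whose factors $R/\mathfrak{p}_i$ satisfy $\mathfrak{p}_i\in\Ass(I)$, i.e.\ to show that a generic $R/I$ is almost clean. I would build such a filtration inductively from the irreducible decomposition of $I$, exploiting the fact that a generic monomial ideal has a \emph{unique} irredundant irreducible decomposition $I=Q_1\cap\cdots\cap Q_m$, its components being read off from the Scarf complex (see \cite{MSY, BPS}), together with the standard fact that then $\Ass(I)=\{\sqrt{Q_1},\ldots,\sqrt{Q_m}\}$.

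First I would order the components so that $\mathfrak{p}_1=\sqrt{Q_1}$ is maximal among the $\sqrt{Q_j}$, set $J=Q_2\cap\cdots\cap Q_m$, so that $I=Q_1\cap J$ and $\Ass(J)=\{\sqrt{Q_2},\ldots,\sqrt{Q_m}\}\subseteq\Ass(I)$. Splicing a prime filtration of the submodule $J/I$ of $R/I$ with a prime filtration of the quotient $R/J$ then yields a prime filtration of $R/I$ whose support is the union of the two supports. For the quotient $R/J$ I would induct on the number $m$ of components; since the components of $J$ are among those of $I$, the support of the inductively obtained filtration lands inside $\Ass(I)$. For the submodule I would use the canonical isomorphism $J/I\cong (J+Q_1)/Q_1$, which embeds $J/I$ into $R/Q_1$. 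As $Q_1$ is irreducible, $R/Q_1$ is a \emph{free} module over the polynomial ring $R/\mathfrak{p}_1$ with $\Ass(R/Q_1)=\{\mathfrak{p}_1\}$, so in any case $\Ass(J/I)\subseteq\{\mathfrak{p}_1\}\subseteq\Ass(I)$.

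The hard part will be showing that $J/I$ actually admits a prime filtration supported \emph{only} on $\{\mathfrak{p}_1\}$, i.e.\ with every factor isomorphic to $R/\mathfrak{p}_1$: having a single associated prime does not by itself guarantee this, since for example the ideal $(x,y)$ of $K[x,y]$ has $\Ass=\{(0)\}$ yet every prime filtration of it must introduce a strictly larger prime. A non-degeneracy hypothesis is therefore indispensable exactly here, and this is where I would feed in genericity: the condition that for $u_i,u_j$ agreeing in a positive exponent there is a third generator strictly dividing $\lcm(u_i,u_j)$ forces the staircase of $R/I$ to meet $Q_1$ transversally, so that $(J+Q_1)/Q_1$ is a free $R/\mathfrak{p}_1$-module and hence has a prime filtration with all factors $R/\mathfrak{p}_1$. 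I would isolate this freeness as the technical heart of the proof and verify it by a direct inspection of the generators of $(J+Q_1)/Q_1$ modulo $\mathfrak{p}_1$, checking that the exponent coincidences ruled out by genericity are precisely those that would otherwise create a relation forcing a larger prime to appear. As a sanity check and a fallback route, one may instead polarize $I$ to a squarefree ideal, translate the statement into shellability via Theorem~\ref{T0}(ii) and Dress's theorem, and descend; I would keep this in reserve should the direct freeness computation become unwieldy.
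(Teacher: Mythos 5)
Your proposal and the paper's proof diverge already at the level of what is being proved. Because $\mathcal{F}$ is not constructed in the statement, the lemma has to be read the way the paper proves and uses it: the equality $\Supp(\mathcal{F})=\Ass(I)$ holds for an \emph{arbitrary} prime filtration $\mathcal{F}$ of $R/I$. Indeed, in the proof of Theorem \ref{T1} the lemma is invoked for a prime filtration refining the dimension filtration, which is never constructed explicitly; existence of one good filtration (almost cleanness, which is your stated goal) would not suffice there. The paper's argument is correspondingly different in nature: starting from an arbitrary $\mathcal{F}$ with factors $R/\mathfrak{p}_i$, it uses \cite[Proposition 7.1]{HP} to attach to $\mathcal{F}$ a decomposition $I=\bigcap_{i=1}^{r}Q_i$ into irreducible monomial ideals with $\sqrt{Q_i}=\mathfrak{p}_i$, and then feeds in genericity through Apel's result \cite[Proposition 2]{A}, which supplies for each $j$ a monomial $u_j\in\bigcap_{i\neq j}Q_i\setminus Q_j$ with $(I:u_j)=\mathfrak{p}_j$; hence every $\mathfrak{p}_j\in\Supp(\mathcal{F})$ is an associated prime of $I$.

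More seriously, your construction cannot work, and the step you yourself flag as the ``technical heart'' is exactly where it breaks. At each recursive stage you split off a component $Q_1$ whose radical $\mathfrak{p}_1$ is \emph{maximal} among the remaining radicals and claim that genericity forces $J/I$ to admit a prime filtration with all factors $R/\mathfrak{p}_1$. If every stage succeeded, then in the resulting filtration a maximal remaining radical would always occur first, so whenever $\mathfrak{p}_i\subsetneq\mathfrak{p}_j$ the prime $\mathfrak{p}_j$ would appear earlier; that is, you would have produced a \emph{pretty clean} filtration. You would therefore have proved that every generic monomial ideal is pretty clean --- precisely Soleyman Jahan's conjecture, which this very paper refutes: $I=(x_1^2x_2,\,x_2^2x_3,\,x_1x_4^2,\,x_3^2x_4)$ is generic but not sequentially Cohen--Macaulay, hence not pretty clean. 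So for this ideal some stage of your construction must fail, which shows that the ``transversality/freeness'' assertion is false in general and cannot be rescued by direct inspection of generators. The same example disposes of your polarization fallback, since by \cite{S2} cleanness of the polarization is equivalent to pretty cleanness of $R/I$. There is also a secondary gap: your induction applies the (inductive form of the) lemma to $J=Q_2\cap\dots\cap Q_m$, but genericity is a condition on the minimal generators of $I$ and is not shown to be inherited by such partial intersections of components. The correct role of genericity is not to force freeness of $J/I$, but, as in the paper, to certify via Apel's witness monomials that the radicals attached to \emph{any} prime filtration are associated primes of $I$.
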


\begin{proof}
We consider the following prime filtration $\mathcal{F}: I=I_0\subset I_1\subset I_2\subset\ldots\subset I_{r-1}\subset I_r=R$ such that $I_{i}/I_{i-1}\cong R/{\frak{p}_i}$. By applying \cite[Proposition 7.1]{HP}, there is irredundant decomposition into irreducible components $I=\bigcap_{i=1}^rQ_i$. 
Since $I$ is a generic monomial ideal by using \cite[Proposition 2]{A} there is a monomial element $u_j\in\bigcap_{1\leq i\neq j\leq r}Q_i\setminus Q_j$ such that $(I:u_j)=\frak{p_j}$. Therefore $\frak{p_j}\in\Ass(I)$ and we deduce that $\Supp(\mathcal{F})\subseteq\Ass(I)$. The other inclusion is clear and so we have 
 $\Supp(\mathcal{F})=\Ass(I)$, as required.
\end{proof}

By the previous lemma, we can conclude the following result which was appeared in \cite[Corollary 3.4]{S2}.

\begin{Corollary}\label{C1}
Let $I$ be a generic monomial ideal. Then $R/I$ is almost clean. In particular, $R/I$ is clean if and only if $\min\Ass(I)=\Ass(I)$.
\end{Corollary}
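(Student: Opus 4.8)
The plan is to deduce Corollary~\ref{C1} directly from Lemma~\ref{L1} together with the definition of ``almost clean'' recalled in the introduction. By definition, $R/I$ is almost clean precisely when there exists a prime filtration $\mathcal{F}$ of $R/I$ with $\Ass(I)=\Supp(\mathcal{F})$. First I would invoke Lemma~\ref{L1}: since $I$ is generic, the prime filtration $\mathcal{F}$ constructed there satisfies $\Supp(\mathcal{F})=\Ass(I)$. This is exactly the condition in the definition of almost clean, so $R/I$ is almost clean immediately, with no further work.

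For the ``in particular'' statement I would unwind the definitions of clean and almost clean and compare them. Recall from the introduction that $R/I$ is clean when it admits a prime filtration $\mathcal{F}$ all of whose supporting primes are \emph{minimal} primes of $I$, and that for a clean filtration one has $\min\Ass(I)=\Ass(I)=\Supp(\mathcal{F})$ (via \cite{D}). For the forward direction, if $R/I$ is clean, then by the cited consequence of \cite{D} we already get $\min\Ass(I)=\Ass(I)$. For the converse, suppose $\min\Ass(I)=\Ass(I)$. Then the almost clean filtration produced above has $\Supp(\mathcal{F})=\Ass(I)=\min\Ass(I)$, so every supporting prime $\frak{p}_i$ is a minimal prime of $I$; this is precisely the defining condition for cleanness, and hence $R/I$ is clean.

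The main point to be careful about — really the only obstacle — is the logical bookkeeping between the three notions (clean, almost clean, and the containment $\min\Ass(I)\subseteq\Ass(I)$, which is always an equality on the ``min'' side). I would make sure to state explicitly that $\min\Ass(I)\subseteq\Ass(I)$ holds in general, so that the extra hypothesis $\min\Ass(I)=\Ass(I)$ is what forces every associated prime to be minimal, thereby upgrading the almost clean filtration of Lemma~\ref{L1} to a clean one. Since genericity has already done all the substantive work inside Lemma~\ref{L1}, the corollary itself should reduce to these definitional implications and requires no new computation.
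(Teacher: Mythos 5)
Your proposal is correct and follows essentially the same route as the paper: the paper states this corollary as an immediate consequence of Lemma~\ref{L1} (citing that it already appeared as \cite[Corollary 3.4]{S2}), and your argument simply makes explicit the definitional bookkeeping the paper leaves implicit — almost cleanness from $\Supp(\mathcal{F})=\Ass(I)$, the forward direction of the equivalence from Dress's result, and the converse by noting that when $\min\Ass(I)=\Ass(I)$ every supporting prime of the filtration is minimal.
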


In the following result, $R/I$ is regarded as being satisfied Serre's condition $(S_r)$ when $\depth(R/I)_{\frak{p}}\geq\min\{r,\dim(R/I)_{\frak{p}}\}$, for all $\frak{p}\in\Spec(R)$.
\begin{Corollary}\label{C2}
Let $I$ be a generic monomial ideal. Then the following statements are equivalent:
\begin{enumerate}
\item[(i)] $R/I$ is Cohen-Macaulay;
\item[(ii)] $R/I$ satisfies in $(S_r)$ for all $r\geq 1$ ;
\item[(iii)] $R/I$ satisfies in $(S_1)$;
\item[(iv)] $R/I$ is clean.
\end{enumerate}
\end{Corollary}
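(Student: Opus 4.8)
The plan is to prove the cycle $(i)\Rightarrow(ii)\Rightarrow(iii)\Rightarrow(iv)\Rightarrow(i)$, in which only the last arrow uses genericity in an essential way. The implication $(i)\Rightarrow(ii)$ is standard: Cohen-Macaulayness localizes, so if $R/I$ is Cohen-Macaulay then $\depth(R/I)_{\frak{p}}=\dim(R/I)_{\frak{p}}\geq\min\{r,\dim(R/I)_{\frak{p}}\}$ for every $\frak{p}\in\Spec(R)$ and every $r\geq 1$; and $(ii)\Rightarrow(iii)$ is merely the case $r=1$.

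For $(iii)\Rightarrow(iv)$ I would invoke the classical fact that a module satisfies $(S_1)$ if and only if it has no embedded associated primes, i.e.\ $\min\Ass(I)=\Ass(I)$. Indeed, an embedded prime $\frak{p}\in\Ass(I)\setminus\min\Ass(I)$ would give $\dim(R/I)_{\frak{p}}\geq 1$ while $\depth(R/I)_{\frak{p}}=0$, contradicting $(S_1)$; conversely the absence of embedded primes makes $(S_1)$ automatic. Once $\min\Ass(I)=\Ass(I)$ is established, Corollary \ref{C1} gives that $R/I$ is clean, which is $(iv)$.

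The only substantial step is $(iv)\Rightarrow(i)$. Here I would first use the chain clean $\Rightarrow$ pretty clean $\Rightarrow$ sequentially Cohen-Macaulay recalled in the Introduction (see \cite{HP}), so that $R/I$ is sequentially Cohen-Macaulay, while cleanness re-supplies $\min\Ass(I)=\Ass(I)$ through Corollary \ref{C1}. It then suffices to prove that $R/I$ is equidimensional, that is, $\dim R/\frak{p}=\dim R/I$ for all $\frak{p}\in\Ass(I)$: for a sequentially Cohen-Macaulay module with no embedded primes, equidimensionality forces every nonzero submodule to have full dimension, so the dimension filtration has a single nonzero step, and hence $R/I$ is Cohen-Macaulay.

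The main obstacle is thus the equidimensionality of a generic clean ideal, and this is precisely where genericity cannot be dropped: the ideal $(x_1x_2,x_1x_3)=(x_1)\cap(x_2,x_3)$ is squarefree and shellable, hence clean, with $\min\Ass=\Ass$, yet its minimal primes have heights $1$ and $2$, so it is neither equidimensional nor Cohen-Macaulay---and it is not generic. To exclude such behaviour I would exploit the irreducible decomposition $I=\bigcap_{i=1}^{r}Q_i$ of a generic ideal already used in Lemma \ref{L1}: by \cite[Proposition 2]{A} the associated primes are exactly the radicals $\frak{p}_i=\sqrt{Q_i}$ of the irreducible components, and I would argue, using the genericity of the exponents together with the Scarf-complex description of these components (see \cite{MSY,Y}), that when no $\frak{p}_i$ contains another the components all have equal height. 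This purity statement is the crux; granting it, $R/I$ is equidimensional and the argument above closes the cycle.
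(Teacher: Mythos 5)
Your implications $(i)\Rightarrow(ii)\Rightarrow(iii)\Rightarrow(iv)$ match the paper exactly: $(S_1)$ is equivalent to the absence of embedded primes, and Corollary \ref{C1} then yields cleanness. The problem is $(iv)\Rightarrow(i)$. Your reduction of Cohen-Macaulayness to equidimensionality is logically sound (clean $\Rightarrow$ pretty clean $\Rightarrow$ sequentially Cohen-Macaulay by \cite{HP}, and a sequentially Cohen-Macaulay module all of whose associated primes have the same dimension has a one-step dimension filtration, hence is Cohen-Macaulay). But the remaining claim --- that a generic monomial ideal with no embedded primes is equidimensional --- is precisely the heart of the matter, and you do not prove it: you only say you ``would argue'' it from the Scarf-complex description of the irreducible components and then explicitly grant it. That purity statement is not a formal consequence of genericity together with the absence of containments among the primes $\frak{p}_i=\sqrt{Q_i}$; it is the substantive content of \cite[Theorem 2.5]{MSY}, whose proof relies on the fact that the Scarf complex of a generic ideal supports the \emph{minimal} free resolution, so that purity can be played off against codimension via Auslander--Buchsbaum. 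As written, your cycle is therefore not closed.

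The paper sidesteps all of this: for $(iv)\Rightarrow(i)$ it simply combines Corollary \ref{C1} (clean $\Rightarrow$ $\min\Ass(I)=\Ass(I)$) with \cite[Theorem 2.5]{MSY}, which states precisely that a generic monomial ideal without embedded primes is Cohen-Macaulay. If you repair your gap by citing that same theorem, your detour through sequential Cohen-Macaulayness and equidimensionality becomes superfluous, since the theorem already delivers Cohen-Macaulayness outright rather than mere equidimensionality; if instead you insist on proving the purity claim independently, you would in effect be reproving the theorem of \cite{MSY}, which a short argument about ``genericity of the exponents'' cannot do.
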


\begin{proof}
$(i)\Longrightarrow (ii)\Longrightarrow (iii)$ are clear as discussed on page $183$ in \cite{M}.\\
$(iii)\Longrightarrow (iv)$. Since $R/I$ satisfies in $(S_1)$; by details can be found on page $183$ in \cite{M} we obtain $\min\Ass(I)=\Ass(I)$.
Therefore Corollary \ref{C1} fulfilled the results.\\
$(iv)\Longrightarrow (i)$. By applying Corollary \ref{C1} $\min\Ass(I)=\Ass(I)$. Thus \cite[Theorem 2.5]{MSY} yield the result, as required. 
\end{proof}

\begin{Theorem}\label{T1}
Let $I$ be a generic monomial ideal. Then $R/I$ is pretty clean if and only if $R/I$ is sequentially Cohen-Macaulay.
\end{Theorem}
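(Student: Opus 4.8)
The plan is to prove the two implications separately, with only the backward one requiring genericity. For the forward direction, if $R/I$ is pretty clean then it is sequentially Cohen-Macaulay by \cite[Corollary 4.3]{HP}, and this needs no hypothesis on $I$; hence the entire content of the theorem lies in the converse. So assume that $R/I$ is sequentially Cohen-Macaulay and write its (uniquely determined) dimension filtration $0=D_0\subset D_1\subset\cdots\subset D_d=R/I$ as in Definition \ref{D1}, so that each nonzero factor $D_j/D_{j-1}$ is Cohen-Macaulay of dimension $j$. Concretely, if $I=\bigcap_{k=1}^{r}Q_k$ is the irredundant irreducible decomposition used in Lemma \ref{L1}, then one checks that $D_j=\big(\bigcap_{\dim R/Q_k>j}Q_k\big)/I$, so that each step of the filtration is governed by the generic components of a fixed dimension; in particular the top factor is the cyclic module $D_d/D_{d-1}\cong R/\bigcap_{\dim R/Q_k=d}Q_k$.

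First I would refine this dimension filtration into a prime filtration by replacing each Cohen-Macaulay factor $D_j/D_{j-1}$ with a clean filtration whose factors are all of the form $R/\frak{p}$ with $\dim R/\frak{p}=j$, and then concatenate these blocks in order of increasing $j$. It then remains to check that the resulting prime filtration $\mathcal{F}$ of $R/I$ is pretty clean, that is, that $\frak{p}_i\subset\frak{p}_j$ forces $j<i$. Two primes coming from the same block have the same dimension and hence are incomparable, so the condition is vacuous there; two primes $\frak{p},\frak{q}$ coming from blocks of dimensions $j'>j$ respectively satisfy $\dim R/\frak{p}>\dim R/\frak{q}$, so any containment must read $\frak{p}\subsetneq\frak{q}$, and since the dimension-$j$ block precedes the dimension-$j'$ block in the concatenation, the larger prime $\frak{q}$ indeed receives the smaller index. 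Thus the ordering condition is automatic once the blocks are stacked by increasing dimension, and $\mathcal{F}$ is pretty clean.

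The one step that genuinely uses genericity, and the step I expect to be the main obstacle, is the cleanness of each Cohen-Macaulay factor $D_j/D_{j-1}$: in general a Cohen-Macaulay monomial quotient need not be clean, whereas Corollary \ref{C2} shows that for a generic monomial ideal the two notions coincide, so the task is to bring each factor within the scope of Corollary \ref{C2}. For the top factor this is transparent, since it is the cyclic quotient $R/\bigcap_{\dim R/Q_k=d}Q_k$ and I would argue that this sub-intersection of the generic decomposition is again generic and, being the top unmixed part of a sequentially Cohen-Macaulay ring, is Cohen-Macaulay, hence clean by Corollary \ref{C2}. For the lower factors the same idea applies to the dimension-$j$ component ideal $\bigcap_{\dim R/Q_k=j}Q_k$, the delicate points being to verify that genericity is inherited by these partial intersections, which is not obvious because the third-generator condition defining genericity could in principle be spoiled by discarding components, and to transfer cleanness from the associated cyclic quotient to the generally non-cyclic factor $D_j/D_{j-1}$ that sits inside it as a monomial submodule. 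Once each factor is shown to be clean, the concatenation of the previous paragraph completes the proof. An alternative route that sidesteps the inheritance issue is to pass to the polarization $I^{\mathrm{pol}}$: polarization preserves the sequentially Cohen-Macaulay property, $R/I$ is pretty clean if and only if $R/I^{\mathrm{pol}}$ is clean, and by Theorem \ref{T0} one is then reduced to showing that, for the squarefree ideal $I^{\mathrm{pol}}$, componentwise linearity of the Alexander dual forces linear quotients, which is again the sequential manifestation of the collapse recorded in Corollary \ref{C2}.
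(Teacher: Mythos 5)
Your forward direction coincides with the paper's (it is just \cite[Corollary 4.3]{HP} and needs no genericity), and your observation that stacking prime filtrations of the dimension-filtration factors in order of increasing dimension automatically produces the pretty clean ordering is correct. However, everything then rests on the one claim you defer: that each Cohen--Macaulay factor $D_j/D_{j-1}$ admits a clean filtration. You rightly flag this as the step where genericity must enter, but you do not prove it, and the two obstacles you name are real obstacles, not formalities. The lower factors $D_j/D_{j-1}\cong\bigl(\bigcap_{\dim R/Q_k>j}Q_k\bigr)/\bigl(\bigcap_{\dim R/Q_k>j-1}Q_k\bigr)$ are quotients of one ideal by another, not cyclic modules $R/J$, so the corollary asserting ``Cohen--Macaulay $\Leftrightarrow$ clean'' for generic ideals does not even formally apply to them; and genericity is a condition on minimal generators, so there is no reason a sub-intersection of the irreducible components of a generic ideal is again generic (you say as much yourself). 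The polarization detour has the same status: $I^{\mathrm{pol}}$ is in general not generic, so that corollary is unavailable for it, and the assertion that componentwise linearity of the Alexander dual of $I^{\mathrm{pol}}$ forces linear quotients is precisely a restatement of what has to be proved. So the proposal is a plausible skeleton whose load-bearing step is missing.

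The paper closes exactly this gap by using genericity globally rather than factor by factor, and this is the idea your attempt lacks. Lemma \ref{L1} states that for a generic ideal \emph{every} prime filtration $\mathcal{F}$ of $R/I$ satisfies $\Supp(\mathcal{F})=\Ass(I)$: no filtration whatsoever can introduce primes outside $\Ass(I)$. One then refines the dimension filtration by \emph{arbitrary} prime filtrations of its factors (these always exist; no cleanness is needed), and combines Lemma \ref{L1} with \cite[Proposition 2.5]{HP}, which gives $\Ass(I)=\bigcup_i\Ass(I_i/I_{i-1})$ with each $\Ass(I_i/I_{i-1})$ unmixed of dimension $d_i$ by Cohen--Macaulayness of the factors. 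This forces the heights of the primes occurring along the refined filtration to be non-increasing, and \cite[Corollary 2.7]{S1} then yields that $R/I$ is pretty clean. In short, instead of making each factor clean, the paper shows that any refinement is automatically correctly ordered, because genericity pins down which primes can occur at all; this is the substitute you would need for the unproved cleanness of the factors, and it cannot be recovered from your Corollary \ref{C2} alone.
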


\begin{proof}
$(\Longrightarrow)$. If $R/I$  is pretty clean, then $R/I$ is sequentially Cohen-Macaulay by applying \cite[Corollary 4.3]{HP}.\\
$(\Longleftarrow)$. Suppose $R/I$ is sequentially Cohen-Macaulay. We can consider the following chain of monomial ideals
$I_0=I\subset I_1\subset I_2\subset\ldots\subset I_r=R$ such that all factor module $I_{i}/I_{i-1}$ is Cohen-Macaulay with $\dim I_{i}/I_{i-1}<\dim I_{i+1}/I_{i}$ for $i=1,\ldots,r-1$. Thus, for each $i$, we can pick $\frak{p}_i\in\Ass(I_{i}/I_{i-1})$ such that $\height(\frak{p}_1)>\height(\frak{p}_2)>\ldots>\height(\frak{p}_r)$. By applying \cite[Proposition 2.5]{HP}, it follows that $\Ass(I)=\bigcup_{1\leq i\leq r}\Ass(I_{i}/I_{i-1})$ and also by using Lemma \ref{L1}, $\Supp(\mathcal{F})=\Ass(I)$. Therefore we deduce that $\height(\frak{p}_{i})\leq\height(\frak{p}_{i-1})$
for all $i$ and from \cite[Corollary 2.7]{S1} we conclude that $R/I$ is pretty clean, as required. 
\end{proof}

\begin{Remark}
It is known that another interesting class of monomial ideals, specifically squarefree monomial ideals known as matroidal ideals, satisfies the result of Theorem \ref{T1}. Specifically, if $I$ is a matroidal ideal, then $R/I$ is pretty clean if and only if $R/I$ is sequentially Cohen-Macaulay, as stated in \cite[Corollary 3.9]{HMS}. 
\end{Remark}

In \cite[p.123]{S1}, Soleyman Jahan \textit{conjectured} that all generic monomial ideals are pretty clean. However, the following example demonstrates a counterexample to this conjecture: it is not only not pretty clean, but it also fails to be sequentially Cohen-Macaulay. Furthermore, this example illustrates that almost clean ideals are not necessarily pretty clean in general.

\begin{Example}
Let $n=4$ and $I=(x_1^2x_2,x_2^2x_3,x_1x_4^2,x_3^2x_4)$. Then it is clear that $I$ is generic and $J=\sqrt{I}=(x_1,x_3)(x_2,x_4)$. Thus 
$J^{\vee}=(x_1x_3,x_2x_4)$ and it is not componentwise linear. Therefore from Theorem \ref{T0}(iii), we conclude that 
$J$ is neither sequentially Cohen-Macaulay nor pretty clean. Therefore, by applying \cite[Theorem 2.6]{HTT}, we obtain that 
$I$ is neither sequentially Cohen-Macaulay nor pretty clean.
\end{Example}

In order to continue the paper, we require the following well-known theorem, for which proofs have been established in \cite[Theorem 2.4]{VVW}.

\begin{Theorem}\label{T2}
Let $\Delta$ be a simplicial complex on the vertex set $V=\{x_1,\ldots,x_n\}$. Then the following statements hold:
\begin{enumerate}
\item[(i)] if $\dim\Delta=0$, then $\Delta$ is both pure shellable and Cohen-Macaulay; 
\item[(ii)] if $\dim\Delta=1$, then $\Delta$ is both pure shellable and Cohen-Macaulay if and only if $\Delta$ is connected.
\end{enumerate}
\end{Theorem}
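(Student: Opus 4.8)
The plan is to treat the two statements separately: in each case I would exhibit an explicit shelling and then invoke the standard fact that a pure shellable complex is Cohen--Macaulay over any field, while for the nontrivial direction of (ii) I would appeal to Reisner's criterion.

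For (i), I would first note that when $\dim\Delta=0$ every facet is a single vertex, so all facets share the same dimension and $\Delta$ is automatically pure. To see that it is shellable, take any ordering $\{v_1\},\ldots,\{v_m\}$ of its vertices; for each $i\geq 2$ the intersection $\langle\{v_i\}\rangle\cap\langle\{v_1\},\ldots,\{v_{i-1}\}\rangle$ equals $\{\emptyset\}$, which is exactly the pure complex of dimension $\dim\{v_i\}-1=-1$ required by the shelling condition. Hence every ordering is a shelling, and since pure shellable complexes are Cohen--Macaulay, (i) follows.

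For (ii), observe first that $\dim\Delta=1$ means $\Delta$ is a graph possessing at least one edge. For the implication that pure shellability together with Cohen--Macaulayness forces connectedness, it suffices to use that the Cohen--Macaulay property alone already implies connectedness: applying Reisner's criterion to the empty face, for which $\lk_\Delta\emptyset=\Delta$, forces $\tilde H_0(\Delta;K)=0$, i.e.\ $\Delta$ is connected. Conversely, suppose $\Delta$ is connected. I would first rule out isolated vertices: an isolated vertex would be a $0$-dimensional facet meeting no edge, contradicting connectedness once an edge is present, so every facet is an edge and $\Delta$ is pure. To build a shelling I would list the edges $F_1,\ldots,F_m$ so that each $F_i$ (for $i\geq 2$) shares a vertex with the subcomplex generated by $F_1,\ldots,F_{i-1}$, which is possible precisely because $\Delta$ is connected (grow the list along a spanning tree, then append the remaining edges). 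For such an ordering the intersection $\langle F_i\rangle\cap\langle F_1,\ldots,F_{i-1}\rangle$ is a nonempty set of vertices of $F_i$, hence pure of dimension $0=\dim F_i-1$, so the ordering is a shelling; pure shellable again yields Cohen--Macaulay, completing the equivalence.

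The genuinely delicate points are the two invocations of the shelling intersection condition, namely checking that the relevant intersections are pure of the correct dimension, and the appeal to Reisner's criterion for the forward direction of (ii); the remainder is bookkeeping. The step I expect to require the most care is making the spanning-tree ordering rigorous, that is, verifying that connectivity genuinely permits the edges to be enumerated so that each new edge attaches to the previously constructed subcomplex.
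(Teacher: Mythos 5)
Your proof is correct, but note that the paper itself contains no proof of this theorem: the statement is quoted as a known result, with the proof delegated entirely to \cite[Theorem 2.4]{VVW}. Your argument is therefore a self-contained substitute for that citation rather than a parallel of an argument appearing in the paper. What you do is the standard direct verification: in (i) a $0$-dimensional complex is automatically pure and any ordering of its vertices is a shelling, since each intersection $\langle\{v_i\}\rangle\cap\langle\{v_1\},\ldots,\{v_{i-1}\}\rangle=\{\emptyset\}$ is pure of dimension $-1=\dim\{v_i\}-1$; in (ii) the forward direction follows from Reisner's criterion applied to the empty face (Cohen--Macaulayness alone forces $\tilde{H}_0(\Delta;K)=0$, i.e.\ connectedness), and the converse follows from your spanning-tree ordering of the edges, whose attaching intersections are nonempty sets of vertices, hence pure of dimension $0$; the implication ``pure shellable implies Cohen--Macaulay'' then closes both parts. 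Two details you should spell out, though neither causes trouble: first, in the shelling check for (ii) the intersection $\langle F_i\rangle\cap\langle F_1,\ldots,F_{i-1}\rangle$ is a \emph{proper} subcomplex of $\langle F_i\rangle$ (as $F_i$ is a facet not among $F_1,\ldots,F_{i-1}$), so it really does consist only of vertices of $F_i$ and $\emptyset$, and nonemptiness is exactly the attaching property; second, your elimination of isolated vertices uses that a $0$-dimensional facet cannot be joined by a chain of pairwise-intersecting facets to any edge, which matches the paper's facet-chain definition of connectedness. The trade-off between the two routes is the usual one: the paper's citation is shorter and attributes the result to its source, while your proof keeps the material self-contained and makes visible that the only nontrivial inputs are Reisner's criterion and the fact that pure shellable complexes are Cohen--Macaulay over any field.
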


\begin{Remark}
Note that if $I$ is a squarefree monomial ideal of single degree $n-2$, then by applying Theorems \ref{T0}, \ref{T2} we can conclude that 
$I$ has a linear resolution if and only if $I$ has a linear quotients, compare with \cite[Theorem 2]{MS}.
\end{Remark}

\begin{Corollary}\label{C2}
Let $I$ be a squarefree monomial ideal of degree $n-1$. Then $R/I$ and $R/{I^{\vee}}$ are pretty clean. In particular, $R/I$ and  $R/{I^{\vee}}$ are sequentially Cohen-Macaulay.  
\end{Corollary}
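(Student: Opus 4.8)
The plan is to reduce everything to the statement that two squarefree ideals have linear quotients, and then feed this into the dictionary of Theorem~\ref{T0}. Write $\{1,\dots,n\}=[n]$, and for $i\in[n]$ set $m_i=\prod_{j\neq i}x_j$; these are exactly the squarefree monomials of degree $n-1$. Since every minimal generator of $I$ has degree $n-1$, there is a subset $S\subseteq[n]$ with $\g(I)=\{m_i:i\in S\}$; put $S^{c}=[n]\setminus S$. Because $I$ and $I^{\vee}$ are squarefree, clean and pretty clean coincide for $R/I$ and for $R/I^{\vee}$ by \cite[Corollary 3.5]{HP}, and pretty clean modules are sequentially Cohen--Macaulay by \cite[Corollary 4.3]{HP}. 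Hence it suffices to prove that $R/I$ and $R/I^{\vee}$ are clean, and by Theorem~\ref{T0}(ii) this amounts to checking that $I^{\vee}$ has linear quotients (giving $R/I$ clean) and that $(I^{\vee})^{\vee}=I$ has linear quotients (giving $R/I^{\vee}$ clean). So the whole statement follows once I show that both $I$ and $I^{\vee}$ have linear quotients.

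First I would dispatch $I$, which is immediate. For any ordering $m_{i_1},\dots,m_{i_s}$ of its generators and any index $p$, a direct computation gives $\gcd(m_{i_l},m_{i_p})=\prod_{j\notin\{i_l,i_p\}}x_j$, whence $m_{i_l}/\gcd(m_{i_l},m_{i_p})=x_{i_p}$ for every $l<p$. Therefore $(m_{i_1},\dots,m_{i_{p-1}}):m_{i_p}=(x_{i_p})$ is generated by a single variable, so $I$ has linear quotients and $R/I^{\vee}$ is clean.

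Next I would identify $I^{\vee}$. A monomial prime $\mathfrak{p}=(x_k:k\in T)$ contains every $m_i$ with $i\in S$ exactly when $T$ meets $[n]\setminus\{i\}$ for each $i\in S$; the minimal such $T$ are the singletons $\{j\}$ with $j\in S^{c}$ and the pairs $\{a,b\}$ with $a,b\in S$. Thus $\Min(I)=\{(x_j):j\in S^{c}\}\cup\{(x_a,x_b):a,b\in S,\ a\neq b\}$, and by the description of the Alexander dual recalled in Section~1 we obtain
\[
I^{\vee}=(x_j:j\in S^{c})+(x_ax_b:a,b\in S,\ a\neq b).
\]
The second summand is the squarefree Veronese ideal of degree $2$ in the variables $\{x_a:a\in S\}$, which is matroidal and in particular has linear quotients, while the first summand is generated by variables in the complementary set $\{x_j:j\in S^{c}\}$. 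I would then order the generators of $I^{\vee}$ by listing the variables $x_j$ ($j\in S^{c}$) first and the degree-$2$ generators afterwards, in a linear-quotient order of the Veronese ideal. Since the two blocks of generators involve disjoint sets of variables, taking the colon of a degree-$2$ generator against the earlier variable generators leaves those variables unchanged, so each colon ideal is the sum of $\{x_j:j\in S^{c}\}$ with the (variable-generated) colon supplied by the Veronese order. Hence every colon ideal is generated by variables, $I^{\vee}$ has linear quotients, and $R/I$ is clean.

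Combining the two computations, $R/I$ and $R/I^{\vee}$ are clean, hence pretty clean, hence sequentially Cohen--Macaulay. The easy half is $I$, whose colons are even principal; the substantive part is the explicit determination of $I^{\vee}$ from $\Min(I)$ together with the verification that its mixed-degree generating set admits a linear-quotient order. For the latter the decisive inputs are the disjointness of the two variable blocks and the known linear quotients of the squarefree Veronese ideal, and this is the step I expect to require the most care.
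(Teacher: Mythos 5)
Your proposal is correct, and it takes a genuinely different route from the paper. The paper handles the two modules by two separate citations: for $R/I^{\vee}$ it observes that $\dim\Delta^{\vee}=0$ and invokes Theorem~\ref{T2}(i) (zero-dimensional complexes are pure shellable and Cohen--Macaulay, hence $R/I^{\vee}$ is clean and Cohen--Macaulay); for $R/I$ it notes that any two distinct generators of degree $n-1$ have supports covering all of $\{x_1,\dots,x_n\}$ and applies \cite[Theorem 2.6]{MNS} (Example~\ref{E1}(ii)). You instead run everything through the Alexander-duality dictionary of Theorem~\ref{T0}(ii), reducing both claims to linear quotients: your colon computation $(m_{i_1},\dots,m_{i_{p-1}}):m_{i_p}=(x_{i_p})$ is correct and settles $R/I^{\vee}$ (it is in effect a hands-on proof of the shellability of the zero-dimensional $\Delta^{\vee}$), and your identification $I^{\vee}=(x_j: j\in S^{c})+(x_ax_b: a\neq b\in S)$, including the determination of $\Min(I)$ and the disjoint-blocks colon argument, is also correct and settles $R/I$ without recourse to \cite{MNS}. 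What your approach buys is a unified, essentially self-contained and explicit argument (it even produces the stronger conclusion that both modules are clean, with explicit shelling/linear-quotient orders, and an explicit primary decomposition of $I$); what it costs is one external input not recalled in the paper, namely that the degree-$2$ squarefree Veronese (matroidal) ideal has linear quotients --- though this could be replaced by a two-line check that the lexicographic order on the pairs $x_ax_b$ works. The paper's proof is shorter but leans on \cite{VVW} and \cite{MNS}; both arguments are sound.
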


\begin{proof}
We can consider the simplicial complex $\Delta$ such that $I=I_{\Delta}$. Since $I$ is of degree $n-1$, it follows $\dim\Delta^{\vee}=0$ and this implies that 
$R/{I^{\vee}}$ is both clean and Cohen-Macaulay, by applying Theorem \ref{T2}. Thus $R/{I^{\vee}}$ is both pretty clean and sequentially Cohen-Macaulay.
For the other part of the result, we may assume that $I=(u_1,\ldots,u_s)$ is a squarefree monomial ideal such that $\supp(u_i)\cup\supp(u_j)=\{x_1,\ldots,x_n\}$ for all $1\leq i\neq j\leq n$. Therefore from \cite[Theorem 2.6]{MNS} we obtain that $R/I$ is both pretty clean and sequentially Cohen-Macaulay. This completes the proof. 
\end{proof}

\begin{Corollary}\label{C3}
Let $\Delta$ be a simplicial complex on the vertex set $V=\{x_1,\ldots,x_n\}$.  If $\dim\Delta=1$, then $\Delta$ is non-pure shellable if and only if $\Delta$ is sequentially Cohen-Macaulay.
\end{Corollary}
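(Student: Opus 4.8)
The plan is to prove the two implications separately, noting that only the converse will need the hypothesis $\dim\Delta=1$.

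For the forward direction, suppose $\Delta$ is non-pure shellable. By Theorem \ref{T0}(ii) this means $K[\Delta]=R/I_\Delta$ is clean; since every clean module is pretty clean and every pretty clean module is sequentially Cohen-Macaulay by \cite[Corollary 4.3]{HP}, I would conclude at once that $\Delta$ is sequentially Cohen-Macaulay. This implication uses nothing about the dimension of $\Delta$.

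For the converse, write $I=I_\Delta$ and recall that $I^\vee$ is minimally generated by the monomials $x_{V\setminus F}$ as $F$ ranges over the facets of $\Delta$. Since $\dim\Delta=1$, each facet is either an edge (contributing a generator of degree $n-2$) or an isolated vertex (contributing a generator of degree $n-1$), and at least one edge exists. First I would isolate the edges by letting $\Gamma$ be the pure $1$-dimensional subcomplex of $\Delta$ whose facets are exactly the edges of $\Delta$. A direct computation with the intersection description of Alexander duals gives $(I^\vee)_{[n-2]}=I_\Gamma^\vee$, because the squarefree degree-$(n-2)$ elements of $I^\vee$ are precisely its edge generators. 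Assuming $\Delta$ is sequentially Cohen-Macaulay, Theorem \ref{T0}(iii) tells me that $I^\vee$ is componentwise linear, so in particular $(I^\vee)_{[n-2]}$ has a linear resolution. As this ideal is generated in the single degree $n-2$, the Remark following Theorem \ref{T2} upgrades the linear resolution to linear quotients, so $I_\Gamma^\vee$ has linear quotients. By Theorem \ref{T0}(ii) the complex $\Gamma$ is shellable, and being pure of dimension $1$ it is Cohen-Macaulay, whence Theorem \ref{T2}(ii) forces $\Gamma$ to be connected.

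It then remains to promote connectedness of the edge graph $\Gamma$ to a shelling of the possibly non-pure complex $\Delta$. Here I would order the facets by listing the edges first, in an order $e_1,\dots,e_m$ for which each initial union $e_1\cup\cdots\cup e_k$ is connected (possible precisely because $\Gamma$ is connected), and appending the isolated vertices $w_1,\dots,w_s$ afterwards in any order. The verification is routine: when an edge $e_k$ is added it meets the previously built subcomplex in at least one of its two vertices, so the intersection is pure of dimension $0=\dim e_k-1$; when an isolated vertex $w_j$ is added its singleton lies in no previously used face, so the intersection equals $\{\emptyset\}$, pure of dimension $-1=\dim w_j-1$. Thus $\Delta$ is non-pure shellable. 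The main obstacle is the converse, and within it the crucial point is the reduction $(I^\vee)_{[n-2]}=I_\Gamma^\vee$ together with the degree-$(n-2)$ passage from linear resolution to linear quotients: this is exactly where $\dim\Delta=1$ enters, since componentwise linearity does not imply linear quotients for general monomial ideals. The concluding shelling is elementary but must be checked separately for the edge facets and for the isolated-vertex facets so that the non-pure shelling condition holds in each case.
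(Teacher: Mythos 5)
Your proof is correct, but it takes a genuinely different route from the paper in the converse direction. The paper's own argument is: sequential Cohen--Macaulayness implies (via the skeleton criterion, \cite[Theorem 6.3.30]{Vi}) that the pure skeletons $\Delta^{[0]}$ and $\Delta^{[1]}$ are Cohen--Macaulay, hence pure shellable by Theorem \ref{T2}, and then a cited gluing statement (\cite[p.136]{HH1}) assembles these into a non-pure shelling of $\Delta$; the forward direction is quoted from \cite[Theorem 6.3.32]{Vi}. You instead run the converse through Alexander duality: Theorem \ref{T0}(iii) gives componentwise linearity of $I^\vee$, the identification $(I^\vee)_{[n-2]}=I_\Gamma^\vee$ isolates the edge subcomplex $\Gamma$, the Remark after Theorem \ref{T2} upgrades linear resolution to linear quotients in the single degree $n-2$, and Theorem \ref{T0}(ii) plus Theorem \ref{T2}(ii) yield connectivity of $\Gamma$; you then build the non-pure shelling explicitly (edges in a connectivity-preserving order, isolated vertices last), rather than citing a gluing result. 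What your approach buys is self-containedness and transparency: every step except the dual characterizations already stated in the paper is verified by hand, and it is visible exactly where $\dim\Delta=1$ enters; what the paper's approach buys is brevity, at the cost of leaning on two external citations. Your forward direction (non-pure shellable $\Rightarrow$ clean $\Rightarrow$ pretty clean $\Rightarrow$ sequentially Cohen--Macaulay, via Theorem \ref{T0}(ii) and \cite[Corollary 4.3]{HP}) is also a valid alternative to the paper's single citation. One small point you should make explicit: the identity $(I^\vee)_{[n-2]}=I_\Gamma^\vee$ treats $\Gamma$ as a complex on the full vertex set $V$, so $\Gamma$ may have ``ghost'' vertices (elements of $V$ that are not faces); applying Theorems \ref{T0} and \ref{T2} to such a $\Gamma$ is harmless because shellability, connectivity and Cohen--Macaulayness depend only on the facets, and computing the dual over $\Gamma$'s actual vertex set instead merely multiplies every generator by one fixed squarefree monomial, which affects neither linear resolutions nor linear quotients --- but this deserves a sentence in a careful write-up.
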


\begin{proof}
If $\Delta$ is non-pure shellable, then it is known that $\Delta$ is sequentially Cohen-Macaulay, by \cite[Theorem 6.3.32]{Vi}.
Conversely, suppose that $\Delta$ is sequentially Cohen-Macaulay. Then by using \cite[Theorem 6.3.30]{Vi} we conclude that the pure $i$-skeleton $\Delta^{[i]}$ is Cohen-Macaulay  for $i=-1,0,1$. Therefore,  $\Delta$ may has the following facets $F_1,\ldots, F_r, \{i_1\}, \ldots, \{i_s\}$, which  $F_1,\ldots, F_r$ are facets of $\Delta^{[1]}$ and $\{i_1\}, \ldots, \{i_s\}$ are facets of $\Delta^{[0]}$. From Theorem \ref{T2}, we deduce that  $\Delta^{[1]}$ and $\Delta^{[0]}$
are pure shellable. Thus by applying \cite[p.136]{HH1}, we obtain that $\Delta$ is non-pure shellable, as required. 
\end{proof}

\begin{Corollary}\label{C4}
Let $I$ be a squarefree monomial ideal of degree $\geq n-2$. Then $R/I^{\vee}$ is sequentially Cohen-Macaulay if and only if $R/I^{\vee}$ is pretty clean. 
\end{Corollary}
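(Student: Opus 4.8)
The plan is to convert the degree hypothesis on $I$ into a dimension bound on $\Delta^{\vee}$ via Alexander duality, and then to assemble a chain of equivalences from the low-dimensional results already available in the paper. Throughout write $I=I_{\Delta}$ and $I^{\vee}=I_{\Delta^{\vee}}$, so that $R/I^{\vee}=K[\Delta^{\vee}]$, and note first that since $I^{\vee}$ is a squarefree monomial ideal, $R/I^{\vee}$ is pretty clean if and only if it is clean by \cite[Corollary 3.5]{HP}. Hence it suffices to prove that $R/I^{\vee}$ is clean if and only if $R/I^{\vee}$ is sequentially Cohen--Macaulay.

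The key structural step is the claim that $\dim\Delta^{\vee}\leq 1$. Let $w_1,\ldots,w_m$ be the minimal monomial generators of $I$, so that $\deg w_j\geq n-2$ for all $j$ by hypothesis. Using the description of Alexander duality recalled in Section~1 applied to $I^{\vee}$ (together with $(I^{\vee})^{\vee}=I$), the minimal primes of $I^{\vee}$ are exactly the $\frak{p}_j=(x_i \mid x_i\in\supp(w_j))$, so that $\height(\frak{p}_j)=\deg(w_j)\geq n-2$. Since a minimal prime of height $h$ of $I_{\Delta^{\vee}}$ corresponds to a facet of $\Delta^{\vee}$ of dimension $n-h-1$, every facet of $\Delta^{\vee}$ has dimension at most $n-(n-2)-1=1$, whence $\dim\Delta^{\vee}\leq 1$.

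With the bound $\dim\Delta^{\vee}\leq 1$ in hand, I would split into two cases. If $\dim\Delta^{\vee}\leq 0$, then by Theorem \ref{T2}(i) the complex $\Delta^{\vee}$ is simultaneously shellable and Cohen--Macaulay, so $R/I^{\vee}$ is at once clean (equivalently pretty clean) and sequentially Cohen--Macaulay, and the asserted equivalence holds trivially. If $\dim\Delta^{\vee}=1$, I would chain the established facts: $R/I^{\vee}$ is clean if and only if $\Delta^{\vee}$ is (non-pure) shellable by Theorem \ref{T0}(ii), if and only if $\Delta^{\vee}$ is sequentially Cohen--Macaulay by Corollary \ref{C3}, if and only if $R/I^{\vee}$ is sequentially Cohen--Macaulay. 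Combining either case with the reduction clean $\Leftrightarrow$ pretty clean from the first step yields the corollary.

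The only place genuine care is required is the dimension-count step: one must correctly match the degrees of the generators of $I$ with the heights of the minimal primes of $I^{\vee}$, and hence with the facet dimensions of $\Delta^{\vee}$, under Alexander duality, and in particular verify the arithmetic $n-h-1$ and that it is precisely these primes that govern $\dim\Delta^{\vee}$. Everything after the inequality $\dim\Delta^{\vee}\leq 1$ is a direct invocation of Theorems \ref{T0} and \ref{T2} and Corollary \ref{C3}, so I expect no further obstacle.
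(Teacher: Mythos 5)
Your proposal is correct and follows essentially the same route as the paper: translate the degree bound on the generators of $I$ into the bound $\dim\Delta^{\vee}\leq 1$ via Alexander duality, then settle the zero-dimensional case by Theorem \ref{T2}(i) and the one-dimensional case by Theorem \ref{T0}(ii) together with Corollary \ref{C3}. The paper's proof is just a terser version of this (it leaves the duality dimension count and the clean/pretty clean bookkeeping implicit), so your extra care in matching generator degrees of $I$ with heights of minimal primes of $I^{\vee}$ and invoking \cite[Corollary 3.5]{HP} only fills in details the paper takes for granted.
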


\begin{proof}
Suppose that $\Delta$ is a simplicial complex with $I=I_{\Delta}$. Since $I$ is of degree $\geq n-2$, this implies that  $\dim\Delta^{\vee}\leq 1$. Therefore, Theorem \ref{T2} and Corollary \ref{C3} fulfilled the result.     
\end{proof}

\begin{Theorem}
Let $n=5$ and $\dim R\leq 5$. If $I$ is squarefree monomial ideal of single degree $d$ with $1\leq d\leq\dim R$, then $R/I^{\vee}$ is sequentially cohen-Macaulay if and only if $R/I^{\vee}$ is pretty clean. 
\end{Theorem}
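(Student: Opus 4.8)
The plan is to pass to the Alexander dual and read off the relevant dimension from the degree $d$. Write $I = I_{\Delta}$, so that $I^{\vee} = I_{\Delta^{\vee}}$. Since $I$ is generated in the single degree $d$, every minimal non-face of $\Delta$ has cardinality $d$, and hence every facet of $\Delta^{\vee}$ (a complement of a minimal non-face of $\Delta$) has cardinality $n-d = 5-d$; thus $\Delta^{\vee}$ is \emph{pure} of dimension $4-d$. Two general reductions then kick in: because $\Delta^{\vee}$ is pure, $R/I^{\vee}$ is sequentially Cohen--Macaulay if and only if it is Cohen--Macaulay; and because $I^{\vee}$ is squarefree, $R/I^{\vee}$ is pretty clean if and only if it is clean, i.e.\ (by Theorem~\ref{T0}(ii)) if and only if $\Delta^{\vee}$ is shellable. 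So the statement reduces to: for the pure complex $\Delta^{\vee}$ of dimension $4-d$ on at most five vertices, Cohen--Macaulayness is equivalent to shellability. Since shellable always implies Cohen--Macaulay, the content is the converse implication.

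Next I would dispose of every degree except $d=2$. For $d \geq 3$ one has $\dim \Delta^{\vee} = 4-d \leq 1$, so $I$ has degree $\geq n-2$ and Corollary~\ref{C4} (which rests on Theorem~\ref{T2} and Corollary~\ref{C3}) gives the equivalence at once. For $d=5$ the ideal is $I=(x_1\cdots x_5)$, forcing $I^{\vee}=\mm$ and $R/I^{\vee}=K$, which is trivially both. For $d=1$ the ideal $I=(x_i : i\in T)$ is prime, so its dual $I^{\vee}=(\prod_{i\in T} x_i)$ is principal; then $R/I^{\vee}$ is a hypersurface, hence Cohen--Macaulay, while $I$ has linear quotients (it is generated by variables), so $\Delta^{\vee}$ is shellable by Theorem~\ref{T0}(ii). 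Thus for $d=1$ both properties hold unconditionally and the equivalence is vacuous.

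The whole weight of the theorem therefore sits in the case $d=2$, where $\Delta^{\vee}$ is pure of dimension $2$ and the dimension-$\leq 1$ machinery of Theorem~\ref{T2} no longer applies; this is the main obstacle. Here I would translate both sides through Alexander duality: by purity $R/I^{\vee}$ is sequentially Cohen--Macaulay iff it is Cohen--Macaulay iff (Theorem~\ref{T0}(i)) $I = (I^{\vee})^{\vee}$ has a linear resolution, while $\Delta^{\vee}$ is shellable iff (Theorem~\ref{T0}(ii)) $I$ has linear quotients. Hence for $d=2$ the claim becomes: a squarefree ideal generated in degree $2$ in five variables has a linear resolution if and only if it has linear quotients. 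Since linear quotients always yields a linear resolution, only the forward implication is substantive.

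To finish the $d=2$ case I would invoke Fr\"oberg's theorem, characterizing a degree-$2$ squarefree ideal $I=I(G)$ as having a linear resolution precisely when the complementary graph $\overline{G}$ is chordal; a perfect elimination ordering of $\overline{G}$ then orders the generators of $I(G)$ so as to exhibit linear quotients, yielding the converse and closing the argument. Alternatively, for five vertices this last step is a finite inspection of the (finitely many) pure $2$-dimensional complexes on $\{x_1,\dots,x_5\}$, which is exactly the sort of verification automated in Macaulay\,2. The delicate point, and the reason for restricting to $n=5$, is precisely that in dimension $2$ Cohen--Macaulayness does \emph{not} force shellability for arbitrarily many vertices; one must exploit the smallness of the vertex set, equivalently the fact that on five variables a linear resolution of a degree-$2$ ideal already forces linear quotients.
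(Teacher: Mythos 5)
Your argument is correct, and for most degrees it coincides with the paper's proof: the paper disposes of $d=1,5$ as trivial and of $d=3,4$ via Corollary \ref{C4}, exactly as you do (the paper additionally invokes \cite[Theorem 1.3]{AP} to cover the case of fewer than five variables, which you skip). The genuine divergence is the case $d=2$: the paper settles it by citing \cite[Proposition 2.8]{MNS} together with Theorem \ref{T0}, whereas you give a self-contained argument --- purity of $\Delta^{\vee}$ reduces sequential Cohen--Macaulayness to Cohen--Macaulayness, Alexander duality (Theorem \ref{T0}(i),(ii) plus \cite[Corollary 3.5]{HP}) converts both sides into statements about $I$ itself, and then Fr\"oberg's theorem with a perfect elimination ordering of the complement graph yields ``linear resolution $\Longrightarrow$ linear quotients'' for ideals generated in degree $2$. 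This route buys transparency: it makes explicit that the entire theorem reduces to the purely homological statement ``linear resolution $\iff$ linear quotients'' for the dual ideal, and it replaces an external citation by a classical argument.

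One remark in your proposal is wrong, though it does not damage the proof. You claim the restriction to $n=5$ is needed because ``on five variables a linear resolution of a degree-$2$ ideal already forces linear quotients.'' In fact this implication holds for squarefree ideals generated in degree $2$ in \emph{any} number of variables: it is precisely the Herzog--Hibi--Zheng theorem (reference \cite{HHZ1} of the paper), since a perfect elimination ordering of the chordal complement graph exists regardless of $n$. The true role of the hypothesis $n=5$ is combinatorial bookkeeping: it forces every admissible degree $d$ to lie in the tractable set $\{1,2,n-2,n-1,n\}$, so no ``intermediate'' degree occurs. Already for $n=6$, $d=3$ the equivalence fails --- this is exactly Terai's example at the end of the paper, where $I=I^{\vee}$ is Cohen--Macaulay (hence sequentially Cohen--Macaulay) but not pretty clean. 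So $n=5$ is there to exclude intermediate degrees, not to rescue the degree-$2$ case.
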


\begin{proof}
By \cite[Theorem 1.3]{AP}, we may assume that $\dim R=5$. If $d=1,5$, there is nothing to prove. By using Corollary \ref{C4}, we have the result for $d=3,4$. If $d=2$, then by applying \cite[Proposition 2.8]{MNS} and Theorem \ref{T0} the result follows, as required.
\end{proof}

Based on established results, we observe that certain edge ideals are consistently pretty clean, while others are pretty clean if and only if they are sequentially Cohen-Macaulay.
\begin{Remark}
\begin{enumerate}
\item[(i)] If $I$ is an edge ideal of a graph with no chordless cycles of length other than $3$ or $5$, then $R/I$ is pretty clean, \cite[Theorem 1]{W}.
In particular, if $I$ is an edge ideal of a chordal graphs, then $R/I$ is pretty clean and so is sequentially Cohen-Macaulay, see also \cite[Theorem 3.2]{FV}.
\item[(ii)] Let $I$ be an edge ideal of a bipartite graph. Then $R/I$ is sequentially Cohen-Macaulay if and only if $R/I$ is pretty clean, \cite[Theorem 2.10]{V}.  
\end{enumerate}
\end{Remark}

The following example appear in \cite{K}. The following examples demonstrate that sequentially Cohen-Macaulay rings are generally not considered pretty clean rings, even in the case of squarefree monomial ideals of small single degree. 
\begin{Example}
Let $n=11$ and $I$ be an ideal of degree $2$ and generated by\\  $x_1x_4,x_1x_5,x_1x_8,x_1x_9,x_2x_5,x_2x_6,x_2x_8,x_2x_{10},x_2x_{11},x_3x_6,x_3x_7,x_3x_9,x_3x_{10},x_4x_7,x_4x_8,x_4x_{11},\\ x_5x_9,x_5x_{10},x_5x_{11},x_6x_8,x_6x_9,x_6x_{11},x_7x_{10},x_7x_{11},x_9x_{11}$.\\
Then $R/I$ is Cohen-Macaulay and so it is sequentially Cohen-Macaulay. But it is not a pretty clean. 
\end{Example}

The following example appear in \cite[Example 4.2]{HV} and this example was attributed to Terai.
\begin{Example}
Let $n=6$ and $I$ be an ideal of degree $3$ and generated by\\
$x_1x_2x_3,x_1x_2x_6,x_1x_3x_5,x_1x_4x_5,x_1x_4x_6,x_2x_3x_4,x_2x_4x_5,x_2x_5x_6,x_3x_4x_6,x_3x_5x_6.$\\ Then $I=I^{\vee}$ is Cohen-Macaulay and so it is sequentially Cohen-Macaulay. But it is not a pretty clean.
\end{Example}

\subsection*{Acknowledgements} We thank Adam Van Tuyl for providing Corollary \ref{C3}.




\begin{thebibliography}{99}
	
\bibitem{AP} S. Ahmad and D. Popescu, {\it Sequentially Cohen-Macaulay monomial ideals of embedding dimension four}, Bull. Math. Soc. Sci. Math. Roumanie, {\bf 50}(2007), 99-110.

\bibitem{A} J. Apel, {On a conjecture of a R.P. Stanley; Part II-quotients modulo monomial ideals}, J. Algebraic Comb., {\bf 17}(2003), 57-74.

\bibitem{BPS} D. Bayer, I. Peeva and B. Sturmfels, {\it Monomial resolutions}, Math. Res. Lett., {\bf 5}(1998), 31-46.

\bibitem{CDSS} G. Caviglia, A. De Stefani, E. Sbarra and F. Strazzanti, {\it On the notion of sequentially Cohen-Macaulay modules}, Res. Math. Sci., {\bf 9} 40, (2022).
\bibitem{D} A. Dress, {\it A new algebraic criterion for shellability}, Beitr. Alg. Geom., {\bf 340}(1993), 45-55.

\bibitem{ER} J. Eagon and V. Reiner, {\it Resolutions of Stanley-Reisner rings and Alexander duality}, J. Pure and Appl. Algebra, {\bf 130}(1998), 265-275.

\bibitem{FV} C. A. Francisco and A. Van Tuyl, {\it Sequentially Cohen-Macaulay edge ideals}, Proc. Amer. Math. Soc., {\bf 135}(2007), 2327-2337.

\bibitem{G} D. R. Grayson and M. E. Stillman, {\it Macaulay 2, a software system for research in algebraic geometry},
Available at http://www.math.uiuc.edu/Macaulay2/.

\bibitem{HMS} P. M. Hamaali, A. Mafi and H. Saremi, {\it A characterization of sequentially Cohen-Macaulay matroidal ideals},  Algebra Colloq., {\bf 30}(2023), 237-244.
\bibitem{HV} H. T. Ha and A. Van Tuyl, {Powers of componentwise linear ideals: the Herzog-Hibi-Ohsugi conjecture related problems}, Res. Math. Sci., {\bf 9}, 22(2022).

\bibitem{HH1} J. Herzog and T. Hibi, {\it Componentwise linear ideals}, Nagoya Math. J., {\bf 153}(1999), 141-153.

\bibitem{HH} J. Herzog and T. Hibi, {\it Monomial Ideals, Graduate Texts in Mathematics}, {\bf 260}, Springer-Verlag London, (2011).

\bibitem{HHZ1} J. Herzog, T. Hibi and X. Zheng, {\it Dirac's theorem on chordal graphs and Alexander duality}, Eur. J. Comb.,
{\bf 25}(2004), 949-960.

\bibitem{HP} J. Herzog and D. Popescu, {\it Finite filtration of modules and shellable multicomplexes}, Manuscripta math., {\bf 121} (2006), 385-410.

\bibitem{HTT} J. Herzog, Y. Takayama and N. Terai, {\it On the radical of a monomial ideal}, Arch. Math., {\bf 85}(2005), 397-408.

\bibitem{HVZ} J. Herzog, M. Vladoiu and X. Zheng, {\it How to compute the Stanley depth of a monomial ideal}, J. Algebra, {\bf 322}(2009), 3151-3169.

\bibitem{K} M. Katzman, {\it Characteristic-independence of Betti nymbers of graph ideals}, J. Comb. Theory, Series A, {\bf 113}(2006), 435-454.

\bibitem{MNS} A. Mafi, D. Naderi and H. Saremi, {\it Vertex decomposabilityand weakly polymatroidal ideals}, to appear in J. Alg. System.
\bibitem{M} H. Matsumura, {\it Commutative ring theory}, Camb. Univ. Press, (1986).
\bibitem{MS} E. Manouchehri and A. Soleyman Jahan, {\it On linear quotients of squarefree monomial ideals}, Bull. Malays. Math. Sci. Soc., {\bf 43}(2020), 1213-1221.

\bibitem{MSY} E. Miller, B. Sturmfeils and K. Yanagawa, {\it Generic and cogeneric monomial ideals}, J. Symb. Comp., {\bf 29}(2000), 691-708.

\bibitem{S2} A. Soleyman Jahan, {\it Prime filtration of monomial ideals and polarizations}, J. Algebra, {\bf 312}(2007), 1011-1032.

\bibitem{S1} A. Soleyman Jahan, {\it Prime filtrations and primary decompositions of modules}, Comm. Algebra, {\bf 39}(2011), 116-124.

\bibitem{S} R. P. Stanley, {\it Combinatorics and commutative algebra}, Birkh\"auser, Second edition, (1996).

\bibitem{VVW} K. N. Vander Meulen, A. Van Tuyl and C. Watt, {\it Cohen-Macaulay circulant graphs}, Comm. Algebra, {\bf 42}(2014), 1896-1910. 

\bibitem{V} A. Van Tuyl, {\it Sequentially Cohen-Macaulay bipartite graphs: vertex decomposability and regularity},
Arch. Math., {\bf 93}(2009), 451-459.

\bibitem{Vi} R. H. Villarreal, {\it Monomial Algebras}, Monographs and Research Notes in Mathematics, Chapman and Hall/CRC, (2015).
\bibitem{W} R. Woodroofe, {\it Vertex decomposable graphs and obstructions to shellability}, Proc. Amer. Math. Soc.,
{\bf 137}(2009), 3235-3246.
\bibitem{Y} K. Yanagawa, {\it  $F_{\Delta}$ type free resolutions of monomial ideals}, Proc. AMS., {\bf 127}(1999), 377-383.

\end{thebibliography}
\end{document}